\theoremstyle{plain}
\newtheorem{theorem}{Theorem}[section]
\newtheorem{lemma}[theorem]{Lemma}
\newtheorem{corollary}[theorem]{Corollary}
\theoremstyle{definition}
\newtheorem{example}[theorem]{Example}
\theoremstyle{remark}
\newtheorem{remark}[theorem]{Remark}
\renewcommand{\Bbb}{\mathbb} 
\begin{document}
	\title[Laplacian spectrum of weakly zero-divisor graph of the ring $\mathbb{Z}_{n}$]{Laplacian spectrum of weakly zero-divisor graph of the ring $\mathbb{Z}_{n}$}
  \author[Mohd Shariq, Praveen Mathil, Jitender Kumar]{ Mohd Shariq, Praveen Mathil, Jitender Kumar$^{*}$}
   \address{Department of Mathematics, Birla Institute of Technology and Science Pilani, Pilani-333031, India}
 \email{shariqamu90@gmail.com, maithilpraveen@gmail.com,  jitenderarora09@gmail.com}

\begin{abstract}
  Let $R$ be a commutative ring with unity. The weakly zero-divisor graph $W\Gamma(R)$ of the ring $R$ is the simple undirected  graph whose vertices are nonzero zero-divisors of $R$ and two vertices $x$, $y$ are adjacent if and only if there exists $r\in {\rm ann}(x)$ and $s \in {\rm ann}(y)$ such that $rs =0$. The zero-divisor graph of a ring is a spanning subgraph of the weakly zero-divisor graph. It is known that the zero-divisor graph of the ring $\mathbb{Z}_{{p^t}}$, where $p$ is a prime, is the Laplacian integral. In this paper, we obtain the Laplacian spectrum of the weakly zero-divisor graph $W\Gamma(\mathbb{Z}_{n})$ of the ring $\mathbb{Z}_{n}$ and show that  $W\Gamma(\mathbb{Z}_{n})$ is Laplacian integral for arbitrary $n$. 
\end{abstract}
\subjclass[2020]{05C25, 05C50}
\keywords{ Weakly zero-divisor graph, ring of integers modulo $n$, Laplacian spectrum \\ *  Corresponding author}
\maketitle

\section{Introduction}

Exploring algebraic structures through graph theory has become a captivating research field over the past three decades. This discipline has not only delivered intriguing and exciting outcomes but has also introduced an entirely unexplored domain.
Researchers have extensively studied graphs associated with algebraic structures such as groups and rings, viz Cayley graphs, power graphs, zero-divisor graphs, cozero-divisor graphs,  weakly zero-divisor graphs, and co-maximal graphs, etc. Such  study provides interconnections between algebra and graph theory. 
The zero-divisor graph $\Gamma(R)$ of a commutative ring $R$ 
is the simple undirected graph with vertices non-zero zero-divisors of $R$ and two distinct vertices $x, y$
are adjacent if $xy = 0$. The zero-divisor graphs of rings have been studied extensively by various authors, see
\cite{MR2016655, MR2043378,MR2762487,MR1700509,MR2354873}  and references therein.
The Laplacian spectrum of the zero-divisor graph of the ring $\mathbb Z_n$ was examined by Chattopadhyay et al. \cite{MR4011590}. They demonstrated that the zero divisor graph $\Gamma(\mathbb Z_{p^t})$ of the ring $\mathbb Z_{p^t}$ is Laplacian integral for every prime $p$ and positive integer $t \geq 2$. Various other spectrums of zero divisor graphs have been studied in   \cite{magi2020spectrum, MR4477153, pirzada2021signless, pirzada2020distance, pirzada2021normalized, rather2021laplacian}.

The weakly zero-divisor graph was introduced by  M. J. Nikmehr et al.  \cite{nikmehr2021weakly}. The \emph{weakly zero-divisor graph} $W\Gamma(R)$ of the ring $R$ is the simple undirected graph whose vertex set is the set of all nonzero zero-divisors of $R$  and two vertices $x, y$ are adjacent if and only if there exists $r\in {\rm ann}(x)$ and $s \in {\rm ann}(y)$ such that $rs =0$.  It is easy to observe that the zero-divisor graph of a ring is a spanning subgraph of the weakly zero-divisor graph. Along with the relations between the zero-divisor graph and the weakly zero-divisor graph of rings, the authors of \cite{nikmehr2021weakly} studied the basic properties, including completeness, girth, clique number and vertex chromatic number etc., of the weakly zero-divisor graph. Moreover, all the rings whose weakly zero-divisor graph is a star graph, unicyclic, tree, and split graph, respectively, have been classified in \cite{rehman2022planarity}. Further, they determined all the rings $R$ for which $W\Gamma(R)$ is planar, toroidal, bitoroidal, and of crosscap almost two, respectively. 
  
Motivated by the study of various spectrums of zero-divisor graphs, in this paper, we study the Laplacian spectrum of the weakly zero-divisor graph associated with the ring $\mathbb{Z}_n$. The paper is arranged as follows: In Section 2, we recall the necessary definitions and results. In Section 3, we discuss the structure of $W\Gamma(\mathbb{Z}_n)$. Section 4 obtains the Laplacian spectrum of the weakly zero-divisor graph of the ring $\mathbb{Z}_n$ for arbitrary $n$.
\section{preliminaries}
This section recalls necessary definitions and results. Also, we fix our notations which are used throughout the paper. For unexplained terms of graph theory, we refer the reader to \cite{westgraph}.
A graph $ \Gamma = (V, E)$, where $V = V(\Gamma)$ and $E = E(\Gamma)$ are the set of vertices and the set of edges of $\Gamma$, respectively. 
 Two distinct vertices $x, y \in \Gamma$ are $\mathit{adjacent}$, denoted by $x \sim y$, if there is an edge between $x$ and $y$. Otherwise, we denote it by $x \nsim y$.   
The set $N_{\Gamma}(x)$ of all the vertices adjacent to $x$ in $\Gamma$ is said to be the \emph{neighborhood} of $x$. Additionally, we denote $N[x] = N(x) \cup \{x\}$. 
The graph used in this paper is a simple graph, i.e. undirected graph with no loops or repeated edges. 
 A \emph{subgraph} $\Gamma'$ of a graph $\Gamma$ is a graph such that $V(\Gamma') \subseteq V(\Gamma)$ and $E(\Gamma') \subseteq E(\Gamma)$. If $U \subseteq V(\Gamma)$, then the subgraph of $\Gamma$ induced by $U$, denoted by $\Gamma(U)$, is the graph with vertex set $U$ and two vertices of $\Gamma(U)$ are adjacent if and only if they are adjacent in $\Gamma$. The \emph{complement} $\overline{\Gamma}$ of the graph $\Gamma$ is a graph with the same vertex set as $\Gamma$ and distinct vertices $x, y$ are adjacent in $\overline{\Gamma}$ if they are not adjacent in $\Gamma$. A graph $\Gamma$ is said to be $complete$ if every two distinct vertices are adjacent. The complete graph on $n$ vertices is denoted by $K_n$. A path in a graph is a sequence of distinct vertices with the property that each vertex in the sequence is adjacent to the next vertex of it. The graph $\Gamma$ is said to be \emph{connected} if there is a path between every pair of vertex. The distance $d(x,y)$ between two vertices $x$ and $y$ in a graph is defined as the minimum number of edges in a shortest path connecting them. The \emph{degree} $\text{deg}(v)$ of a vertex $v \in \Gamma$, is the number of edges adjacent to $v$. The \emph{union} $\Gamma_1 \cup \Gamma_2$ is the graph with $V(\Gamma_1 \cup \Gamma_2) = V(\Gamma_1) \cup V(\Gamma_2)$ and $E(\Gamma_1 \cup \Gamma_2) = E(\Gamma_1) \cup E(\Gamma_2)$. The \emph{join} $\Gamma_1 \vee \Gamma_2$ of $\Gamma_1$ and $\Gamma_2$ is the graph obtained from the union of $\Gamma_1$ and $\Gamma_2$ by adding new edges from each vertex of $\Gamma_1$ to every vertex of $\Gamma_2$. Let $\Gamma$ be a graph on $k$ vertices and $V(\Gamma) = \{u_1, u_2, \cdots, u_k\}$. Suppose that $\Gamma_1, \Gamma_2, \cdots, \Gamma_k$ are $k$ pairwise disjoint graphs. Then the \emph{generalised join graph} $\Gamma[\Gamma_1, \Gamma_2, \cdots, \Gamma_k]$ of $\Gamma_1, \Gamma_2, \cdots, \Gamma_k$ is the graph formed by replacing each vertex $u_i$ of $\Gamma$ by $\Gamma_i$ and then joining each vertex of $\Gamma_i$ to every vertex of $\Gamma_j$ whenever $u_i \sim u_j$ in $\Gamma$. 
 
For a finite simple graph $\Gamma$ with vertex set $V(\Gamma) = \{u_1, u_2, \ldots, u_k\}$, the \emph{adjacency matrix} $A(\Gamma)$ is defined as the $k\times k$ matrix whose $(i, j)th$ entry is $1$ if $u_i \sim u_j$, and $0$ otherwise. We denote the diagonal matrix by $D(\Gamma) = {\rm diag}(d_1, d_2, \ldots, d_k)$, where $d_i$ is the degree of the vertex $u_i$ of $\Gamma$. The \emph{Laplacian matrix} $\mathcal{L}(\Gamma)$ of the graph $\Gamma$ is the matrix $D(\Gamma) - A(\Gamma)$. The matrix $\mathcal{L}(\Gamma)$ is a symmetric and positive semidefinite so that its eigenvalues are real and non-negative. Furthermore, the sum of each row (column) of $\mathcal{L}(\Gamma)$ is zero.
The \emph{characteristic polynomial} of $\mathcal{L}(\Gamma)$ is denoted by $\Phi(\mathcal{L}(\Gamma), x)$. 
  The eigenvalues of $\mathcal{L}(\Gamma)$ are called the \emph{Laplacian eigenvalues} of $\Gamma$. If all the eigenvalues of  $\mathcal{L}(\Gamma)$ are integers, then the graph $\Gamma$ is said to be $\emph{Laplacian integral}$. The second smallest Laplacian eigenvalue of $\mathcal{L}(\Gamma)$, denoted by $\mu(\Gamma)$, is called the \emph{algebraic connectivity} of $\Gamma$. The largest Laplacian eigenvalue $\lambda(\Gamma)$ of $\mathcal{L}(\Gamma)$ is called the \emph{Laplacian spectral radius} of $\Gamma$. Let $\lambda_{1}(\Gamma), \lambda_{2}(\Gamma), \ldots, \lambda_{r}(\Gamma)$ be the distinct eigenvalues of $\Gamma$ with multiplicities $\mu_1, \mu_2, \ldots, \mu_r$, respectively. 
 The \emph{Laplacian spectrum} of $\Gamma$ (or the spectrum  of $\mathcal{L}(\Gamma)$) is represented as
 \begin{center}
$\displaystyle \Phi(\mathcal{L}(\Gamma)) = \begin{pmatrix}
\lambda_{1}(\Gamma) & \lambda_{2}(\Gamma) & \cdots& \lambda_{r}(\Gamma)\\
 \mu_1 & \mu_2 & \cdots & \mu_r
\end{pmatrix}$. 
\end{center}
Sometime we write $\Phi(\mathcal{L}(\Gamma)$  as $\Phi_{L}(\Gamma)$ also.
The Laplacian spectrum of the complete graph $K_m$
 on $m$ vertices and its complement graph $\overline{K}_{m}$
 is given by 
 \begin{align*}
\Phi_{L}(K_{m}) 
 &= \displaystyle \begin{pmatrix}
0 &  m \\
1& m-1\\
\end{pmatrix} 
\end{align*}
and
\begin{align*}
\Phi_{L}({\overline{K}_{m}}) 
 &= \displaystyle \begin{pmatrix}
0 \\
m\\
\end{pmatrix} .
\end{align*}
The following results are useful in the sequel.

\begin{theorem}\cite{MR3009442}\label{laplacianspectrum theoremforjoin}
Let $\Gamma$ be a graph on $k$ vertices having $V(\Gamma) = \{u_1, u_2, \ldots, u_k\}$ and let $\Gamma_1, \Gamma_2, \ldots, \Gamma_k$ be $k$ pairwise disjoint graphs on $n_1, n_2, \ldots, n_k$ vertices, respectively. Then the Laplacian spectrum of $\Gamma[\Gamma_1, \Gamma_2, \ldots, \Gamma_k]$ is given by
\begin{equation}\label{lapacianequation}
\Phi_{L}(\Gamma[\Gamma_1, \Gamma_2, \ldots, \Gamma_k]) = \bigcup\limits_{i=1}^{k} (D_{i} + (\Phi_{L}(\Gamma_i) \setminus \{0\})) \bigcup \Phi(\mathbb{L}(\Gamma))
    \end{equation}
where \[ D_i = \begin{cases} 
      \sum \limits_{u_j \sim u_i}n_j & ~~\text{if}~~ N_{\Gamma}(u_i) \neq \emptyset;\\
      0 & \rm{otherwise} 
    \end{cases}
\]
\begin{equation}
\mathbb{L}({\Gamma})  = \displaystyle \begin{bmatrix}
	D_{1}&  -p_{1,2} & \cdots & -p_{1,k}  \\
	-p_{2,1}& D_{2}   &\cdots&  -p_{2,k} \\ 
 \cdots & \cdots & \cdots & \cdots \\
 -p_{k,1} & -p_{k,2} & \cdots & D_{k}
	\end{bmatrix}
\end{equation}
such that \[ p_{i,j} = \begin{cases} 
      \sqrt{n_in_j} & ~~\text{if}~~ u_i \sim u_j~~ \text{in}~~ \Gamma\\
      0 & \rm{otherwise} 
    \end{cases} \]
in $(\rm\ref{lapacianequation})$, $(\Phi_{L}(\Gamma_i) \setminus \{0\}))$ means that one copy of the eigenvalue $0$ is removed from the multiset $\Phi_{L}(\Gamma_i)$, and $D_i+(\Phi_{L}(\Gamma_i) \setminus \{0\}))$ means $D_i$ is added to each element of $(\Phi_{L}(\Gamma_i) \setminus \{0\}))$.\\

Let $\Gamma$ be a weighted graph by assigning the weight $n_i = |V(\Gamma_i)|$ to the vertex $u_i$ of $\Gamma$ and $i$ varies from $1$ to $k$. Consider $L(\Gamma)= (l_{i,j})$ to be a $k \times k$ matrix, where 
\[ l_{i,j} = \begin{cases}
      -n_j &~~~~\text{if} ~~i \neq j ~~\text{and}~~u_i \sim u_j;\\
      \sum \limits_{u_i \sim u_r}n_r & ~~\text{if}~~ i=j;\\
      0 & \rm{otherwise.} 
    \end{cases}
\]
\end{theorem}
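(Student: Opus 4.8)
The plan is to analyze the Laplacian matrix $\mathcal{L}(G)$ of $G = \Gamma[\Gamma_1, \ldots, \Gamma_k]$ directly in block form, exploiting the fact that $G$ is assembled by gluing the blocks $\Gamma_i$ along the adjacency pattern of $\Gamma$. Ordering the $N = n_1 + \cdots + n_k$ vertices block by block, the diagonal block attached to $\Gamma_i$ equals $\mathcal{L}(\Gamma_i) + D_i I_{n_i}$, since every vertex of $\Gamma_i$ acquires $D_i = \sum_{u_j \sim u_i} n_j$ extra neighbours from the joined blocks, while the $(i,j)$ off-diagonal block is $-J_{n_i \times n_j}$ when $u_i \sim u_j$ in $\Gamma$ and the zero matrix otherwise. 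Here $J_{n_i \times n_j}$ is the all-ones matrix and $I_{n_i}$ the identity.

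First I would produce $N-k$ eigenpairs supported on a single block. For each $i$, the matrix $\mathcal{L}(\Gamma_i)$ is symmetric with the all-ones vector $\mathbf{1}_{n_i}$ lying in its $0$-eigenspace, so I choose an orthonormal eigenbasis of $\mathcal{L}(\Gamma_i)$ containing $\mathbf{1}_{n_i}/\sqrt{n_i}$; the remaining $n_i-1$ members realize the eigenvalues of $\Phi_L(\Gamma_i)\setminus\{0\}$ and are orthogonal to $\mathbf{1}_{n_i}$. Let $x$ be such a vector with $\mathcal{L}(\Gamma_i)x = \mu x$ and $x \perp \mathbf{1}_{n_i}$, and extend it by zeros to a vector $\tilde{x}$ on all of $G$. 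The diagonal block gives $(\mathcal{L}(\Gamma_i) + D_i I_{n_i})x = (\mu + D_i)x$, while every off-diagonal block $-J_{n_j \times n_i}$ annihilates $x$ because the entries of $x$ sum to zero; hence $\tilde{x}$ is an eigenvector of $\mathcal{L}(G)$ with eigenvalue $D_i + \mu$. Over all $i$ this yields $\sum_i (n_i - 1) = N - k$ mutually orthogonal eigenvectors, whose eigenvalues are precisely $\bigcup_i \bigl(D_i + (\Phi_L(\Gamma_i)\setminus\{0\})\bigr)$.

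The remaining $k$ eigenvalues live on the orthogonal complement, namely the $k$-dimensional space of vectors that are constant on each block, which is invariant under the symmetric matrix $\mathcal{L}(G)$. On a block-constant vector taking the value $a_i$ on $\Gamma_i$, a short computation (using $\mathcal{L}(\Gamma_i)\mathbf{1}_{n_i}=0$ and $J_{n_i \times n_j}\mathbf{1}_{n_j} = n_j \mathbf{1}_{n_i}$) shows that $\mathcal{L}(G)$ returns the block-constant vector whose value on block $i$ is $D_i a_i - \sum_{u_j \sim u_i} n_j a_j$. Thus $\mathcal{L}(G)$ restricted to this subspace is represented by the non-symmetric matrix $L(\Gamma)=(l_{i,j})$ of the statement. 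Conjugating by $S = \mathrm{diag}(\sqrt{n_1}, \ldots, \sqrt{n_k})$ converts $L(\Gamma)$ into $\mathbb{L}(\Gamma)$, because $\bigl(S\, L(\Gamma)\, S^{-1}\bigr)_{i,j} = \sqrt{n_i}\, l_{i,j}/\sqrt{n_j}$ equals $D_i$ on the diagonal and $-\sqrt{n_i n_j}$ when $u_i \sim u_j$; as similar matrices they share the same spectrum, so this subspace contributes exactly $\Phi(\mathbb{L}(\Gamma))$.

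Finally, since $\mathbb{R}^N$ is the orthogonal direct sum of the $(N-k)$-dimensional space of block-mean-zero vectors and the $k$-dimensional space of block-constant vectors, both invariant under $\mathcal{L}(G)$, the full Laplacian spectrum of $G$ is the multiset union of the two contributions, which is exactly the identity~\eqref{lapacianequation}. I expect the main obstacle to be the bookkeeping in the block-constant subspace: checking that it is genuinely $\mathcal{L}(G)$-invariant and correctly recognizing the similarity $S\,L(\Gamma)\,S^{-1} = \mathbb{L}(\Gamma)$ that replaces the natural but non-symmetric quotient matrix $L(\Gamma)$ by the symmetric matrix $\mathbb{L}(\Gamma)$, whose real eigenvalues may then legitimately be read off as Laplacian eigenvalues of $G$.
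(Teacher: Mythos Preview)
The paper does not prove this theorem; it is quoted from Cardoso, de~Freitas, Martins and Robbiano \cite{MR3009442} as a preliminary tool, and the only related content the paper adds is the unproved assertion (just after the theorem and in Remark~\ref{laplacianremark}) that $\mathbb{L}(\Gamma)$ and $L(\Gamma)$ are similar. Your block-decomposition argument is correct and is essentially the standard proof of this result: the splitting of $\mathbb{R}^N$ into the $(N-k)$-dimensional block-mean-zero subspace and the $k$-dimensional block-constant subspace, together with the explicit similarity $S\,L(\Gamma)\,S^{-1}=\mathbb{L}(\Gamma)$, is exactly how the cited reference proceeds, and in particular you supply the justification for Remark~\ref{laplacianremark} that the paper omits.
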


The matrix ${L}(\Gamma)$ is called the vertex-weighted Laplacian matrix of $\Gamma$, which is a zero-row sum matrix but not a symmetric matrix in general.
Though the $k \times k$ matrix $\mathbb{L}(\Gamma)$ defined in Theorem \ref{laplacianspectrum theoremforjoin}, is a symmetric matrix, it need not be a zero row sum matrix. Since the matrices $\mathbb{L}(\Gamma)$  and $L(\Gamma)$ are similar, we have the following remark.
\begin{remark}\label{laplacianremark}
$\Phi(\mathbb{L}(\Gamma)) = \Phi(L(\Gamma))$.
\end{remark}
Let $\mathbb{Z}_n=\{0,1,\ldots, n-1$\} be the ring of integers modulo $n$. The number of integers that are prime to $n$ and less than $n$ is denoted by \emph{Euler totient function} $\phi(n)$. An integer $d$, where $1 < d < n$, is called a proper divisor of $n$ if $d|n$. The number of all the divisors of $n$ is denoted by $\tau(n)$. The greatest common divisor of the two positive integers $a$ and $b$ is denoted by \text{gcd}$(a, b)$. The ideal generated by the element $a$ of $\mathbb{Z}_n$ is the set $\{xa \; : \; x \in \mathbb{Z}_n\}$ and it is denoted by $\langle a \rangle$. 


\section{Structure of the weakly zero-divisor graph $W\Gamma (\mathbb{Z}_n)$}
In this section, we discuss the structure of the weakly zero-divisor graph $W\Gamma(\mathbb{Z}_n)$.  Let $d_1, d_2, \ldots, d_k$ be the proper divisors of $n$. For $1 \leq i \leq k$, consider the following sets
\begin{center}
    $\mathcal{A}_{d_{i}}$= $\{ x \in \mathbb{Z}_{n}: \text{gcd}(x,n) = d_{i}\}$.
\end{center}

\begin{remark}\label{partition}
The sets $\mathcal{A}_{d_{1}}, \mathcal{A}_{d_{2}}, \ldots, \mathcal{A}_{d_{k}}$ form a partition of the vertex set of the graph $W\Gamma(\mathbb{Z}_n)$. Thus, $V(W\Gamma(\mathbb{Z}_n)) = \mathcal{A}_{d_{1}} \cup \mathcal{A}_{d_{2}} \cup \cdots \cup \mathcal{A}_{d_{k}}$. 
\end{remark}

The cardinality of each $\mathcal{A}_{d_i}$ is known in the following lemma.
\begin{lemma}\cite{MR3404655}\label{valueof partition}
 For $1 \leq i \leq k$, we have
$|\mathcal{A}_{d_i}| = \phi\left(\frac{n}{d_i}\right)$. 
\end{lemma}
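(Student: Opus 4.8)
The plan is to prove the count purely by elementary number theory, by exhibiting an explicit bijection between $\mathcal{A}_{d_i}$ and the set of residues in $\{0, 1, \ldots, \frac{n}{d_i} - 1\}$ that are coprime to $\frac{n}{d_i}$; the latter set has cardinality $\phi\!\left(\frac{n}{d_i}\right)$ by the very definition of the Euler totient function recalled above. No properties of the graph $W\Gamma(\mathbb{Z}_n)$ are needed, since $\mathcal{A}_{d_i}$ is defined purely in terms of the greatest common divisor.

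First I would note that if $x \in \mathcal{A}_{d_i}$, then $d_i = \gcd(x,n)$ divides $x$, so $x$ may be written uniquely as $x = d_i y$ with $0 \le y < \frac{n}{d_i}$; this defines a map $\psi \colon \mathcal{A}_{d_i} \to \{0, 1, \ldots, \frac{n}{d_i} - 1\}$ sending $x$ to $y$. The key step is the scaling identity $\gcd(d\,a, d\,b) = d\,\gcd(a,b)$: applying it with $n = d_i \cdot \frac{n}{d_i}$ gives $\gcd(d_i y, n) = d_i \cdot \gcd\!\left(y, \frac{n}{d_i}\right)$. Hence $\gcd(x,n) = d_i$ holds if and only if $\gcd\!\left(y, \frac{n}{d_i}\right) = 1$, so $\psi$ carries $\mathcal{A}_{d_i}$ bijectively onto the set of totatives of $\frac{n}{d_i}$ in the range $\{0, \ldots, \frac{n}{d_i}-1\}$. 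Counting these gives $|\mathcal{A}_{d_i}| = \phi\!\left(\frac{n}{d_i}\right)$, as claimed.

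The \emph{main obstacle} here is essentially bookkeeping rather than any genuine difficulty: one must justify the gcd scaling identity and verify that the range $0 \le y < \frac{n}{d_i}$ matches exactly the residues counted by $\phi\!\left(\frac{n}{d_i}\right)$, so that $\psi$ is well defined, injective, and surjective. Because $d_i$ is a proper divisor with $d_i > 1$, the value $y = 0$ fails coprimality (its gcd with $\frac{n}{d_i}$ equals $\frac{n}{d_i} \neq 1$), which is consistent with $x = 0$ being excluded as a vertex. Since the result is standard, I would either present this short bijective argument or simply invoke the cited reference \cite{MR3404655}.
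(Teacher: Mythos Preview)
Your argument is correct: the bijection $x \mapsto x/d_i$ between $\mathcal{A}_{d_i}$ and the totatives of $n/d_i$ in $\{0,\ldots,n/d_i-1\}$, justified via the scaling identity $\gcd(d_i y, d_i \cdot n/d_i) = d_i\gcd(y,n/d_i)$, is the standard way to establish this count, and your handling of the boundary case $y=0$ is appropriate since $n/d_i>1$ for a proper divisor $d_i$.

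The paper itself does not supply a proof of this lemma at all; it is simply quoted from \cite{MR3404655} as a known fact. So rather than taking a different route, you are providing a self-contained elementary proof where the paper is content to cite. This is entirely reasonable: the result is classical and your half-page argument removes the dependence on the external reference at essentially no cost. Your closing remark that one could ``simply invoke the cited reference'' is in fact exactly what the paper does.
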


 \begin{lemma}\label{adjacenyofvertex}
 Let $x \in {\mathcal{A}_{d_i}}$, $ y \in \mathcal{A}_{d_j}$, where $i, j \in \{1, 2, \ldots, k\}$ and $i\neq j$. Then $x \sim y$ in $W\Gamma (\mathbb{Z}_n)$.
 \end{lemma}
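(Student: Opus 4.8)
The plan is to verify the adjacency condition straight from the definition: I must exhibit a nonzero $r \in \mathrm{ann}(x)$ and a nonzero $s \in \mathrm{ann}(y)$ with $rs = 0$ in $\mathbb{Z}_n$. First I record the shape of the annihilators. Since $\gcd(x,n) = d_i$, one checks that $rx \equiv 0 \pmod n$ holds precisely when $(n/d_i) \mid r$, so $\mathrm{ann}(x) = \langle n/d_i\rangle$, and likewise $\mathrm{ann}(y) = \langle n/d_j\rangle$. More useful for the construction is the elementary observation that if $p$ is any prime with $p \mid d_i$, then $n/p \in \mathrm{ann}(x)$: indeed $p \mid d_i = \gcd(x,n) \mid x$ forces $n \mid (n/p)x$, and $n/p \not\equiv 0 \pmod n$ since $p>1$. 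The candidate I will use is $r = n/p$ and $s = n/q$ for suitable prime divisors $p \mid d_i$ and $q \mid d_j$; the whole argument then reduces to the single computation $rs = n^2/(pq)$, which satisfies $rs \equiv 0 \pmod n$ if and only if $pq \mid n$.

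The generic case is when $d_i$ and $d_j$ admit prime divisors $p$ and $q$ with $p \neq q$ (in particular whenever $d_i$ or $d_j$ has two distinct prime factors, or when they are powers of two different primes). Here $p$ and $q$ are distinct primes each dividing $n$, so $pq \mid n$, and the computation above gives $rs \equiv 0 \pmod n$. Thus $r = n/p \in \mathrm{ann}(x)\setminus\{0\}$ and $s = n/q \in \mathrm{ann}(y)\setminus\{0\}$ witness $x \sim y$.

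The only obstruction to this generic argument — and the step where I expect the hypothesis $i \neq j$ to be essential — is the degenerate situation in which no two distinct primes can be chosen, which happens exactly when $d_i$ and $d_j$ are powers of one and the same prime $p$, say $d_i = p^{c}$ and $d_j = p^{c'}$. Since $i \neq j$ gives $d_i \neq d_j$, the exponents satisfy $c \neq c'$, whence $\max(c,c') \ge 2$; as both $d_i$ and $d_j$ divide $n$, this forces $p^2 \mid n$. Now taking $r = s = n/p$, both lie in the respective nonzero annihilators as before, and $rs = n^2/p^2 \equiv 0 \pmod n$ because $p^2 \mid n$. Combining the two cases establishes $x \sim y$ in all situations, and I anticipate the only real care is needed in correctly isolating this single-prime exceptional case and in observing that distinctness of $d_i$ and $d_j$ upgrades $p \mid n$ to $p^2 \mid n$.
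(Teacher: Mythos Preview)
Your proof is correct and follows essentially the same approach as the paper: split into the case where $d_i$ and $d_j$ admit distinct prime divisors $p$ and $q$ (use $r=n/p$, $s=n/q$) and the case where both are powers of a single prime $p$ (use $r=s=n/p$). If anything, your write-up is more careful than the paper's, since you explicitly justify $p^2\mid n$ in the single-prime case, which is exactly what is needed for $(n/p)^2\equiv 0\pmod n$.
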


\begin{proof}    
Clearly, $x=kd_i$ and $y=k'd_j$ for some positive integers $k, k'$. 
If there exist two distinct prime divisors $p$ and $q$ of $n$  such that $p\mid d_i$ and $q\mid d_j$, then consider $s=\frac{n}{p}$ and $t=\frac{n}{q}$. Note that $s \in {\rm ann}(x)$ and $t\in {\rm ann}(y)$. Also, $st=0$. It follows that $x\sim y$ in $W\Gamma(\mathbb Z_n)$. If there exists a prime divisor $p$ of $n$ such that $d_i=p^{\alpha}$ and $d_j=p^{\beta}$, where  ${ \alpha\neq \beta}$, then choose $s=\frac{n}{p}=t$. Consequently, $s\in {\rm ann}(x)$ and $t\in {\rm ann}(y)$ such that $st=0$. Thus, $x\sim y$ in $W\Gamma(\mathbb Z_n)$.
\end{proof}

 \begin{lemma}\label{annhilatorrelation} 
 For $ x \in \mathcal{A}_{d_i}$, we have $ {\rm ann}(x)$=${\langle \frac{n}{d_i}\rangle }$.
\end{lemma}
\begin{proof}   Clearly, $x=md_i$ for some positive integer $m$. Now let
 $t\in {\rm ann}(x)$. Note that ${\rm ann}(x)={\rm ann}(d_i)$. It follows that $td_i=0$ (mod $n$). Thus, $t=\alpha \frac{n}{d_i}$ for some positive integer $\alpha$. Therefore $t\in\langle\frac{n}{d_i}\rangle$ and so ${\rm ann}(x)\subseteq \langle\frac{n}{d_i}\rangle$. Suppose that $0\neq t\in\langle\frac{n}{d_i}\rangle$. Then $t=k \frac{n}{d_i}$ for some positive integer $k$, where $k\neq 0$ (mod $d_i$). It implies that $tx=0$. Thus, we obtain ${\rm ann}(x)=\langle\frac{n}{d_i}\rangle$.
\end{proof}

\begin{lemma}\label{adjacenyofvertexinjoin} 
For any proper divisor  $d_i$ of $n$, $W\Gamma (\mathcal{A}_{d_i})$ is either a null graph or a complete graph.
\end{lemma}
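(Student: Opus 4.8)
The plan is to exploit the fact, established in Lemma \ref{annhilatorrelation}, that the annihilator of a vertex depends only on the divisor class to which it belongs. Fix a proper divisor $d_i$ of $n$ and take two distinct vertices $x, y \in \mathcal{A}_{d_i}$. By Lemma \ref{annhilatorrelation} we have ${\rm ann}(x) = {\rm ann}(y) = \langle \frac{n}{d_i} \rangle$. Consequently the adjacency test for the pair $\{x,y\}$ --- whether there exist (nonzero) $r \in {\rm ann}(x)$ and $s \in {\rm ann}(y)$ with $rs = 0$ --- reduces to the single question of whether there exist (nonzero) $r, s \in \langle \frac{n}{d_i} \rangle$ with $rs = 0$.

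The crucial point is that this last condition makes no reference to the particular elements $x$ and $y$: it is a property of the ideal $\langle \frac{n}{d_i} \rangle$, hence of $d_i$ alone. Therefore the outcome of the adjacency test is identical for every unordered pair of distinct vertices drawn from $\mathcal{A}_{d_i}$; adjacency inside the class is forced to be an \emph{all-or-nothing} affair. If the condition holds, then all such pairs are adjacent and $W\Gamma(\mathcal{A}_{d_i})$ is complete; if it fails, then no such pair is adjacent and $W\Gamma(\mathcal{A}_{d_i})$ is a null graph. This dichotomy is exactly the assertion of the lemma, with the case $|\mathcal{A}_{d_i}| = 1$ being trivially both.

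I do not expect a genuine obstacle here, since the argument is purely structural: once the annihilators within a class are seen to coincide, the claim follows immediately. The only point that needs care is the nonzero-witness convention in the definition of $W\Gamma(\mathbb{Z}_n)$ --- otherwise the choice $r = s = 0$ would render every pair adjacent --- and it is precisely the insistence on nonzero witnesses that allows the null case to arise. If one wishes, the dichotomy can be made explicit by determining, in terms of the prime factorization of $n$, exactly when $\langle \frac{n}{d_i} \rangle$ contains nonzero elements $r, s$ with $rs \equiv 0 \pmod n$; this would identify which divisor classes induce complete subgraphs and which induce null ones, but such a refinement is not needed for the statement as given.
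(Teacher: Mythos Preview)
Your argument is correct and follows essentially the same approach as the paper: both rely on Lemma~\ref{annhilatorrelation} to conclude that all vertices in $\mathcal{A}_{d_i}$ share the common annihilator $\langle \frac{n}{d_i}\rangle$, so adjacency within the class is determined uniformly. The paper frames this as a proof by contradiction (assume not complete, pick a non-adjacent pair, then any adjacent pair would force that pair to be adjacent too), whereas you state the ``all-or-nothing'' dichotomy directly; the underlying idea is identical.
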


\begin{proof} 
Suppose $W\Gamma (\mathcal{A}_{d_i})$ is not a complete graph. Then there exist $x, y \in \mathcal{A}_{d_i}$ such that $x \nsim y$. We show that $W\Gamma (\mathcal{A}_{d_i})$ is a null graph. Let $w, z$ be an arbitrary pair of vertices of $W\Gamma (\mathcal{A}_{d_i})$ such that $w \sim z$. Then there exist $s \in {\rm ann}(w)$ and $t\in {\rm ann}(z)$ such that $st=0$. By Lemma \ref{annhilatorrelation}, we have ${\rm ann}(x) = {\rm ann}(y) = {\rm ann}(z) = {\rm ann}(w)$. Consequently, $x \sim y$, which is not possible. Thus, $W\Gamma (\mathcal{A}_{d_i})$ is a null graph.
\end{proof}

If $n={p_1}^{k_1}{p_2}^{k_2}\ldots{p_m}^{k_m},$ where $k_i\geq 2$, then by  \rm \cite[Theorem 2.6]{nikmehr2021weakly}, $W\Gamma(\mathbb{Z}_n)$ is a complete graph. Also, if $n = p$, then clearly $W\Gamma(\mathbb{Z}_p)$ is an empty graph. Therefore, in the remaining paper, we compute the Laplacian spectrum of $W\Gamma(\mathbb{Z}_n)$ for $n = p_1p_2\ldots p_m{q_1}^{k_1}{q_2}^{k_2}\ldots{q_r}^{k_r} $, where $k_i\geq 2,m\geq1$ and $r\geq0$. 
 
\begin{lemma}\label{adjacenyofjoin}
Let $D$= $\{ d_1,d_2,\ldots, d_{k}\}$ be the set of all proper divisors of $n$ and $n = p_1p_2\ldots p_m{q_1}^{k_1}{q_2}^{k_2}\ldots{q_r}^{k_r} $, where $k_i\geq 2,m\geq 1$ and $r\geq 0 $. Then the subgraph of $W\Gamma ({\mathbb Z_n}) $  induced by $\mathcal{A}_{d_i}$ is $\overline{K}_{\phi\left({\frac{n}{d_i}}\right)}$ if and only if $d_i\in \{p_1,p_2,\ldots ,p_m\}$.
\end{lemma}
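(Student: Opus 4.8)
The plan is to lean on the dichotomy already established in Lemma \ref{adjacenyofvertexinjoin}: the induced subgraph $W\Gamma(\mathcal{A}_{d_i})$ is either a null graph or a complete graph, so proving the lemma amounts to deciding \emph{exactly when} it is null (i.e.\ $\overline{K}_{\phi(n/d_i)}$ rather than $K_{\phi(n/d_i)}$). I would reduce the adjacency question to arithmetic as follows. Fix distinct $x,y\in\mathcal{A}_{d_i}$; by Lemma \ref{annhilatorrelation} we have ${\rm ann}(x)={\rm ann}(y)=\langle n/d_i\rangle$, so by the definition of $W\Gamma(\mathbb{Z}_n)$, $x\sim y$ if and only if there exist nonzero $r,s\in\langle n/d_i\rangle$ with $rs=0$ in $\mathbb{Z}_n$. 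Since this condition does not depend on the particular $x,y$, it controls the whole class, consistent with the null/complete dichotomy. (The statement is substantive only when $\phi(n/d_i)\ge 2$; for a one-vertex class $\overline{K}_1=K_1$ and the distinction is vacuous.)

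Writing $e_i=n/d_i$, every nonzero element of $\langle e_i\rangle$ is $\alpha e_i$ with $\alpha\in\{1,\ldots,d_i-1\}$, these being nonzero precisely because $d_i\nmid\alpha$. For $r=\alpha e_i$ and $s=\beta e_i$ one has $rs\equiv\alpha\beta e_i^2\pmod n$, and since $n=d_i e_i$,
\[
rs\equiv 0 \pmod n \iff d_i\mid\alpha\beta e_i \iff d'\mid\alpha\beta,
\]
where $g=\gcd(d_i,e_i)$, $d'=d_i/g$, and the last equivalence uses $\gcd(d',e_i/g)=1$. Thus $W\Gamma(\mathcal{A}_{d_i})$ is complete if and only if there exist $\alpha,\beta\in\{1,\ldots,d_i-1\}$ with $d'\mid\alpha\beta$, and null otherwise.

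To finish, I would solve this divisibility existence problem by a short case split. If $g=\gcd(d_i,e_i)>1$, then $d'<d_i$ (indeed $d'\le d_i/2\le d_i-1$), so $\alpha=d'$, $\beta=1$ works and the graph is complete. If $g=1$, then $d'=d_i$ and one needs $\alpha,\beta<d_i$ with $d_i\mid\alpha\beta$; this is possible exactly when $d_i$ is composite (write $d_i=ab$ with $1<a,b<d_i$ and take $\alpha=a$, $\beta=b$) and impossible when $d_i$ is prime. Hence $W\Gamma(\mathcal{A}_{d_i})$ is null if and only if $\gcd(d_i,e_i)=1$ and $d_i$ is prime. The concluding step is to match this with the set $\{p_1,\ldots,p_m\}$: for a prime $d_i=\ell$, one has $\gcd(\ell,n/\ell)=1$ iff $v_\ell(n)=1$, i.e.\ iff $\ell$ is one of the primes occurring to the first power in $n=p_1\cdots p_m q_1^{k_1}\cdots q_r^{k_r}$, the $q_t$ all having exponent $k_t\ge 2$.

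I expect the genuine content to lie in the completeness direction — producing the annihilator pair $r,s$ — which rests entirely on the clean reduction to $d'\mid\alpha\beta$; once that translation is secured, the construction of $\alpha,\beta$ is routine, and the only remaining care is the bookkeeping identifying ``$d_i$ prime with $\gcd(d_i,n/d_i)=1$'' with the primes $p_1,\ldots,p_m$ via the prescribed factorization of $n$.
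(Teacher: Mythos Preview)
Your proof is correct. Both you and the paper invoke Lemma~\ref{annhilatorrelation} to identify ${\rm ann}(x)=\langle n/d_i\rangle$ and Lemma~\ref{adjacenyofvertexinjoin} to reduce to a single adjacency question, but you then take a different route to settle that question. The paper argues by explicit construction: if $d_i\notin\{p_1,\dots,p_m\}$ it picks prime divisors $p$ (and possibly $q$) of $d_i$ and exhibits $s=n/p$, $t=n/q$ (or $s=t=n/p$) directly, checking $st=0$; conversely for $d_i=p_k$ it verifies that any nonzero $s,t\in\langle n/p_k\rangle$ satisfy $st\ne 0$. You instead parametrize $\langle n/d_i\rangle$, translate $rs=0$ into the single divisibility condition $d'\mid\alpha\beta$ with $d'=d_i/\gcd(d_i,n/d_i)$, and solve that uniformly. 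Your approach is a bit more systematic and yields the clean characterization ``null $\iff$ $d_i$ prime and $\gcd(d_i,n/d_i)=1$'' in one stroke; the paper's approach is more hands-on but avoids introducing the auxiliary quantity $d'$. Your parenthetical remark about the one-vertex case $\phi(n/d_i)=1$ is apt: the ``only if'' direction is vacuous there (e.g.\ $d_i=pq$ in $n=2pq$), an edge case the paper's proof also glosses over, though it is harmless for the downstream spectral computations since $K_1=\overline{K}_1$.
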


\begin{proof}
 First suppose that $W\Gamma (\mathcal{A}_{ d_i})=\overline{K}_{\phi\left({\frac{n}{d_i}}\right)}$. On contrary, assume that $d_i\notin \{p_1,p_2,\ldots,p_m\}$. Let $x,y\in \mathcal{A}_{d_i}$. If there exist two prime divisors $p,q$ of $d_i$, then on taking $s=\frac{n}{p}$ and $t=\frac{n}{q}$, we get $s\in {\rm ann}(x)$ and $t\in {\rm ann}(y)$ such that $st=0$. It follows that $x\sim y$, which is not possible.
 If there exist a prime divisor $p$ of $d_i$, then consider $s=\frac{n}{p}=t$. One can observe that $s\in {\rm ann}(x)$ and $t\in {\rm ann}(y)$ such that $st=0$. It implies that $x\sim y$, again a contradiction. Therefore, $d_i\in \{p_1,p_2,\ldots,p_m\}$.
Conversely, suppose that $d_i\in \{p_1,p_2,\ldots,p_m\}$ i.e. $d_i=p_k$ for some $k$. Let $x,y\in \mathcal{A}_{d_i} $ and let $0\neq s\in {\rm ann}(x)$ and $0\neq t\in {\rm ann}(y)$. Then $s=\alpha \frac{n}{p_{k}}$ and $t=\beta\frac{n}{p_k}$, where $\alpha\neq0$ (mod $p_k$) and $\beta\neq0$ (mod $p_k$). Note that $st\neq 0$ (mod $n$) and so $x\nsim y$ in $W\Gamma(\mathcal{A}_{d_i})$. Therefore, $W\Gamma (\mathcal{A}_ {d_i})$ is not a complete graph. By Lemma \ref{adjacenyofvertexinjoin}, we obtain $W\Gamma (\mathcal{A}_ {d_i})=\overline{K}_{\phi\left({\frac{n}{d_i}}\right)}$.
\end{proof}

\begin{corollary}\label{partitionofcozerodivisorgraphisomorphic}
The following statements hold:
\begin{itemize}
    \item[(i)] For $i \in \{1, 2, \ldots, k\}$, the  subgraph of $W\Gamma(\mathbb{Z}_n)$ induced by $\mathcal{A}_{d_i}$ is isomorphic to either $\overline{K}_{\phi\left(\frac{n}{d_i}\right)}$ or ${K}_{\phi\left(\frac{n}{d_i}\right)}.$
    \item [(ii)] For $i,j \in \{1, 2, \ldots, k\}$ and $i \neq j$, a vertex of $\mathcal{A}_{d_i}$ is adjacent to  all the vertices of $\mathcal{A}_{d_j}$. 
\end{itemize}
\end{corollary}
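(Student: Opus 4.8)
The plan is to deduce both statements directly from the structural lemmas established above, since this corollary simply repackages their content. For part (i), I would first invoke Lemma \ref{adjacenyofvertexinjoin}, which guarantees that for any proper divisor $d_i$ of $n$ the induced subgraph $W\Gamma(\mathcal{A}_{d_i})$ is either a null graph or a complete graph. To pin down the exact isomorphism type, I would then count vertices: by Lemma \ref{valueof partition} we have $|\mathcal{A}_{d_i}| = \phi\!\left(\frac{n}{d_i}\right)$, so a null induced subgraph on this vertex set is precisely $\overline{K}_{\phi(n/d_i)}$ while a complete one is precisely $K_{\phi(n/d_i)}$. Combining these two observations yields the dichotomy asserted in (i).

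For part (ii), I would apply Lemma \ref{adjacenyofvertex} essentially verbatim: it asserts that whenever $x \in \mathcal{A}_{d_i}$ and $y \in \mathcal{A}_{d_j}$ with $i \neq j$, the vertices $x$ and $y$ are adjacent in $W\Gamma(\mathbb{Z}_n)$. Since $x$ and $y$ here range over arbitrary elements of the two cells $\mathcal{A}_{d_i}$ and $\mathcal{A}_{d_j}$, this is exactly the statement that every vertex of $\mathcal{A}_{d_i}$ is joined to every vertex of $\mathcal{A}_{d_j}$, which is the claim.

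I do not anticipate any genuine obstacle, as all of the analytic work has already been absorbed into Lemmas \ref{valueof partition}, \ref{adjacenyofvertex}, and \ref{adjacenyofvertexinjoin}; the corollary is purely a consolidation step. The only point deserving a word of care lies in part (i): Lemma \ref{adjacenyofvertexinjoin} delivers only the qualitative dichotomy (null versus complete), so one must remember to pair it with the cardinality formula of Lemma \ref{valueof partition} in order to justify the subscript $\phi\!\left(\frac{n}{d_i}\right)$. The sharper divisor-theoretic criterion that actually decides which of the two cases occurs is the content of Lemma \ref{adjacenyofjoin} and is not needed for the weaker disjunction recorded here.
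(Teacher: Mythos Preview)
Your proposal is correct and matches the paper's approach: the paper offers no explicit proof for this corollary, treating it as an immediate consequence of Lemmas \ref{valueof partition}, \ref{adjacenyofvertex}, and \ref{adjacenyofvertexinjoin}, exactly as you outline. Your remark that Lemma \ref{adjacenyofjoin} is not needed for the bare disjunction in (i) is also accurate.
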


We define $\Upsilon_n$ by a complete graph on the set $\{d_1,d_2,\ldots,d_k\}$ of all proper divisors of $n$.

\begin{lemma}\label{inducedsubgraphequaltogamma} 
Let  $W\Gamma(\mathcal{A}_{d_i})$ be the subgraph of $W\Gamma(\mathbb{Z}_{n})$ induced by $\mathcal{A}_{d_i}$ $(1\leq i\leq k)$. Then \[W\Gamma(\mathbb{Z}_n) = \Upsilon_n [W\Gamma(\mathcal{A}_{d_1}), W\Gamma (\mathcal{A}_{d_2}), \ldots, W\Gamma(\mathcal{A}_{d_k})].\]
\end{lemma}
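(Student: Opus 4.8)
The plan is to prove the stated equality of graphs directly, by checking that the two sides have the same vertex set and the same edge set. Since the generalised join is defined purely by a vertex-replacement together with a rule for joining blocks, I can match it edge-by-edge against the structure of $W\Gamma(\mathbb{Z}_n)$ that has already been established in the preceding lemmas.

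First I would identify the vertex sets. By Remark \ref{partition}, the sets $\mathcal{A}_{d_1}, \mathcal{A}_{d_2}, \ldots, \mathcal{A}_{d_k}$ partition $V(W\Gamma(\mathbb{Z}_n))$. On the other hand, by the definition of the generalised join recalled in Section 2, the graph $\Upsilon_n[W\Gamma(\mathcal{A}_{d_1}), \ldots, W\Gamma(\mathcal{A}_{d_k})]$ is obtained by replacing each vertex $d_i$ of $\Upsilon_n$ by the graph $W\Gamma(\mathcal{A}_{d_i})$, so its vertex set is exactly $\bigcup_{i=1}^{k} V(W\Gamma(\mathcal{A}_{d_i})) = \bigcup_{i=1}^{k} \mathcal{A}_{d_i}$. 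Hence both graphs have the same vertex set.

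Next I would compare adjacencies, splitting into two cases according to whether the two vertices lie in a common block. For $x, y$ belonging to the same $\mathcal{A}_{d_i}$: in the generalised join the join rule only inserts edges \emph{between} distinct blocks and leaves the intra-block structure equal to $W\Gamma(\mathcal{A}_{d_i})$, which by definition of an induced subgraph records precisely the adjacency of $x$ and $y$ inside $W\Gamma(\mathbb{Z}_n)$; so the two graphs agree on such pairs. For $x \in \mathcal{A}_{d_i}$ and $y \in \mathcal{A}_{d_j}$ with $i \neq j$: Lemma \ref{adjacenyofvertex} (equivalently Corollary \ref{partitionofcozerodivisorgraphisomorphic}(ii)) gives $x \sim y$ in $W\Gamma(\mathbb{Z}_n)$, while in the join, since $\Upsilon_n$ is the complete graph on $\{d_1, \ldots, d_k\}$ we have $d_i \sim d_j$, so the join rule inserts an edge between every vertex of $\mathcal{A}_{d_i}$ and every vertex of $\mathcal{A}_{d_j}$, in particular between $x$ and $y$. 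Thus the two graphs agree on cross-block pairs as well.

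Combining the two cases shows that the edge sets coincide, and together with the vertex-set identification this yields the claimed equality. I do not expect a genuine obstacle in this argument; the only point that needs care is confirming that the completeness of $\Upsilon_n$ matches exactly the ``every vertex of one block is adjacent to every vertex of another block'' behaviour established in Lemma \ref{adjacenyofvertex}, so that the join produces neither a spurious nor a missing cross-block edge.
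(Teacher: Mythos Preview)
Your proof is correct and follows essentially the same approach as the paper: the paper's proof is a one-line appeal to the definition of the generalised join together with Lemma~\ref{adjacenyofvertex}, and your argument simply spells this out carefully by matching vertex sets and then checking intra-block and cross-block adjacencies.
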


\begin{proof}
The result can be obtained by replacing the vertex $d_i$ of $\Upsilon_n$ by $W\Gamma(\mathcal{A}_{d_i})$, for $1 \leq i \leq k$ and by using Lemma \ref{adjacenyofvertex}. 
\end{proof}

\begin{example}
The weakly zero-divisor graph $W\Gamma(\mathbb{Z}_{18})$ is shown in Figure $1$. 

By Lemma \ref{inducedsubgraphequaltogamma}, note that  $W\Gamma(\mathbb{Z}_{18}) = \Upsilon_{18} [W\Gamma(\mathcal{A}_{2})$, 
$W\Gamma(\mathcal{A}_{3})$, $W\Gamma(\mathcal{A}_{6})$, 
$W\Gamma(\mathcal{A}_{9})$], where $\Upsilon_{18}$ is complete graph on the set $\{2,3,6,9\}$ and  $W\Gamma(\mathcal{A}_{2})$ = $\overline{K}_6$, $W\Gamma(\mathcal{A}_{3})$ = ${K}_2$ = $W\Gamma(\mathcal{A}_{6})$, $W\Gamma(\mathcal{A}_{9})$ = ${K}_1 $. 
\begin{figure}[h!]
\centering
\includegraphics[width=0.4 \textwidth]{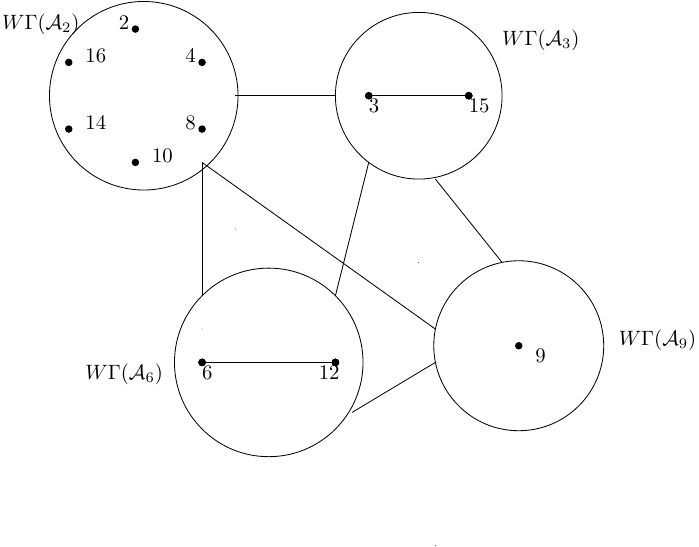}
\caption{The graph $W\Gamma(\mathbb{Z}_{18})$}
\end{figure}
\end{example}

\section{Laplacian spectrum of $W\Gamma(\mathbb{Z}_n)$}

In this section, we compute the Laplacian spectrum of the graph $W\Gamma(\mathbb{Z}_n)$ for arbitrary $n$.  For each $1 \leq i \leq k$, we assign a weight to the vertex $d_i$ of the graph $\Upsilon_n$ and this weight is determined by the value of the Euler's totient function applied to $\frac{n}{d_i}$. We denote this weight as $\phi\left(\frac{n}{d_i}\right)$ which is equivalent to the cardinality of the set $\mathcal{A}_{d_i}$.

The $k \times k$ weighted Laplacian matrix $L(\Upsilon_{n})$ of $\Upsilon_{n}$, defined in Theorem \ref{laplacianspectrum theoremforjoin}, is given by 
\begin{equation}
L(\Upsilon_{n})  = \displaystyle \begin{bmatrix}\label{spectrum matrix2}
	D_{d_1}&  -l_{1,2} & \cdots & -l_{1,k}  \\
	-l_{2,1}& D_{d_2}   &\cdots&  -l_{2,k} \\ 
 \cdots & \cdots & \cdots & \cdots \\
 -l_{k,1} & -l_{k,2} & \cdots & D_{d_k}
	\end{bmatrix},
\end{equation}
where \[ l_{i,j} = \begin{cases} 
      \phi\left(\frac{n}{d_j}\right) & ~~\text{if}~~ d_i \sim d_j~~ \text{in}~~ \Upsilon_{n};\\
      0 & \rm{otherwise.} 
    \end{cases}
\]
and \begin{center}
    $D_{d_j} =  \sum \limits_{{d_i} \in N_{\Upsilon_n}(d_j)} \phi\left(\frac{n}{d_i}\right)$.
\end{center}
\begin{theorem}\label{laplacianeigenvaluescozero}
The Laplacian spectrum of $W\Gamma(\mathbb{Z}_n)$ is given by \begin{center}
    $\Phi_{L}(W\Gamma(\mathbb{Z}_n)) = \bigcup\limits_{i=1}^{k} (D_{d_i} + (\Phi_{L}(W\Gamma(\mathcal{A}_{d_i})) \setminus \{0\})) \bigcup \Phi(L(\Upsilon_{n}))$,
\end{center}
where $D_{d_i} + (\Phi_{L}(W\Gamma(\mathcal{A}_{d_i})) \setminus \{0\})$ represents that $D_{d_i}$ is added to each element of the multi-set $(\Phi_{L}(W\Gamma(\mathcal{A}_{d_i})) \setminus \{0\})$.
\end{theorem}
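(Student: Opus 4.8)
The plan is to obtain the statement as a direct application of the generalized-join spectral formula in Theorem~\ref{laplacianspectrum theoremforjoin} to the structural decomposition recorded in Lemma~\ref{inducedsubgraphequaltogamma}. First I would invoke Lemma~\ref{inducedsubgraphequaltogamma} to write $W\Gamma(\mathbb{Z}_n) = \Upsilon_n[W\Gamma(\mathcal{A}_{d_1}), W\Gamma(\mathcal{A}_{d_2}), \ldots, W\Gamma(\mathcal{A}_{d_k})]$, so that the base graph is $\Upsilon_n$, the complete graph on the $k$ proper divisors of $n$, and the $i$-th constituent graph is $W\Gamma(\mathcal{A}_{d_i})$. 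The weight attached to the vertex $d_i$ is $n_i = |V(W\Gamma(\mathcal{A}_{d_i}))| = |\mathcal{A}_{d_i}|$, which by Lemma~\ref{valueof partition} equals $\phi\left(\frac{n}{d_i}\right)$; this is exactly the weight used in the definition of $L(\Upsilon_n)$.

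With these identifications, I would apply Theorem~\ref{laplacianspectrum theoremforjoin} with $\Gamma = \Upsilon_n$ and $\Gamma_i = W\Gamma(\mathcal{A}_{d_i})$. The formula produces $\bigcup_{i=1}^k \left(D_i + (\Phi_{L}(W\Gamma(\mathcal{A}_{d_i})) \setminus \{0\})\right)$ together with $\Phi(\mathbb{L}(\Upsilon_n))$, where $D_i = \sum_{d_j \sim d_i} n_j$ whenever $N_{\Upsilon_n}(d_i) \neq \emptyset$. The one point that needs checking is that we always land in this first case of the definition of $D_i$: since the cases $n = p$ and $n = p_1^{k_1}\cdots p_m^{k_m}$ with all exponents at least $2$ have already been set aside, the remaining $n$ possess at least two proper divisors, so $\Upsilon_n$ is complete on $k \geq 2$ vertices and no vertex is isolated. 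Consequently $N_{\Upsilon_n}(d_i) \neq \emptyset$ for every $i$, and $D_i = \sum_{d_j \in N_{\Upsilon_n}(d_i)} \phi\left(\frac{n}{d_j}\right) = D_{d_i}$ in the notation fixed before the theorem.

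It remains to reconcile the matrix term. Theorem~\ref{laplacianspectrum theoremforjoin} returns the spectrum of the symmetric matrix $\mathbb{L}(\Upsilon_n)$, whose off-diagonal entries are $-\sqrt{n_i n_j}$, whereas the statement is phrased using the vertex-weighted Laplacian $L(\Upsilon_n)$. Here I would appeal to Remark~\ref{laplacianremark}, which gives $\Phi(\mathbb{L}(\Upsilon_n)) = \Phi(L(\Upsilon_n))$ by similarity of the two matrices. Substituting this equality into the output of Theorem~\ref{laplacianspectrum theoremforjoin} yields precisely the claimed description of $\Phi_{L}(W\Gamma(\mathbb{Z}_n))$. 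I do not expect a genuine obstacle in this argument; it is essentially a matter of matching notation, and the only step warranting attention is the verification that $\Upsilon_n$ has no isolated vertices over the relevant range of $n$, which legitimizes the simplified form of $D_{d_i}$ used throughout.
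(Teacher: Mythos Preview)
Your proposal is correct and follows exactly the paper's approach: invoke Lemma~\ref{inducedsubgraphequaltogamma} to express $W\Gamma(\mathbb{Z}_n)$ as the generalized join $\Upsilon_n[W\Gamma(\mathcal{A}_{d_1}),\ldots,W\Gamma(\mathcal{A}_{d_k})]$, apply Theorem~\ref{laplacianspectrum theoremforjoin}, and use Remark~\ref{laplacianremark} to pass from $\mathbb{L}(\Upsilon_n)$ to $L(\Upsilon_n)$. Your additional verification that $\Upsilon_n$ has no isolated vertices (so that each $D_i$ coincides with the displayed $D_{d_i}$) is a welcome clarification that the paper leaves implicit.
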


\begin{proof}
By Lemma \ref{inducedsubgraphequaltogamma}, $W\Gamma(\mathbb{Z}_n) = \Upsilon_n [W\Gamma(\mathcal{A}_{d_1}), W\Gamma(\mathcal{A}_{d_2}), \ldots, W\Gamma(\mathcal{A}_{d_k})]$. Consequently, by Theorem \ref{laplacianspectrum theoremforjoin} and Remark \ref{laplacianremark}, the result holds.
\end{proof}



\begin{example}
We discuss the Laplacian spectrum of  $W\Gamma(\mathbb{Z}_n)$ for ${\rm (i)}$ $n=pqr$ and ${\rm (ii)}$ $n = p^kq $ $(k\geq 2)$, where $p$ and $q$ are distinct primes.

\begin{itemize}
    \item[(i)] Let $n=pqr$. First observe that $\Upsilon_{pqr}$ is the complete graph on $6$ vertices. Consequently, by Lemma \ref{inducedsubgraphequaltogamma}, we have  $\Gamma(\mathbb{Z}_{pqr}) = \Upsilon_{pqr}[W\Gamma(\mathcal{A}_{p}), W\Gamma(\mathcal{A}_{q}), W\Gamma(\mathcal{A}_{r}), W\Gamma(\mathcal{A}_{pq}),  W\Gamma(\mathcal{A}_{qr}), W\Gamma(\mathcal{A}_{pr}) ],$ where $W\Gamma(\mathcal{A}_{p}) =\overline {K}_{\phi(qr)}$,  $W\Gamma(\mathcal{A}_{q}) = \overline{K}_{\phi(pr)}$, $W\Gamma(\mathcal{A}_{r}) =\overline {K}_{\phi(pq)}$, $W\Gamma(\mathcal{A}_{pq}) = {K}_{\phi(r)}$, 
$W\Gamma(\mathcal{A}_{qr}) = {K}_{\phi(p)}$ and
$W\Gamma(\mathcal{A}_{pr}) = {K}_{\phi(q)}$. The cardinality $|V|$ of the vertex set $V$ of $W\Gamma(\mathbb{Z}_{pqr})$ is given by ${\phi(p)+\phi(q)+\phi(r)+\phi(pq)+\phi(qr)+\phi(pr)}$. It follows that $D_p = \phi(pr)+\phi(pq)+\phi(p)+\phi(q)+\phi(r)= |V|-\phi(qr)$ and $D_q = |V|-\phi(pr)$, $D_{r} =| V|-\phi(pq)$, $D_{pq} = |V|-\phi(r)$, $D_{qr} =| V|-\phi(p)$ and  $D_{pr} =| V|-\phi(q)$. Therefore, by Theorem \ref{laplacianeigenvaluescozero}, the Laplacian spectrum of $W\Gamma(\mathbb{Z}_{pqr})$ is 
\begin{align*}
\Phi_{L}(W\Gamma(\mathbb{Z}_{pqr})) &= (D_{p} + (\Phi_{L}(W\Gamma(\mathcal{A}_{p})) \setminus \{0\})) \bigcup (D_{q} + (\Phi_{L}(W\Gamma(\mathcal{A}_{q})) \setminus \{0\})) \bigcup (D_{r} + (\Phi_{L}(W\Gamma(\mathcal{A}_{r})) \setminus \{0\}))\\
& \bigcup (D_{pq} + (\Phi_{L}(W\Gamma(\mathcal{A}_{pq})) \setminus \{0\})) \bigcup (D_{qr} + (\Phi_{L}(W\Gamma(\mathcal{A}_{qr})) \setminus \{0\})) \bigcup (D_{pr} + (\Phi_{L}(W\Gamma(\mathcal{A}_{pr})) \setminus \{0\}))\\ & \bigcup \Phi(L(\Upsilon_{pqr}))\\
 &= \displaystyle \begin{pmatrix}
|V| &  |V|-\phi(pq) & |V|-\phi(qr)& |V|-\phi(pr) \\
\phi(p)+\phi(q)+\phi(r)-3& \phi(pq)-1&\phi(qr)-1&\phi(pr)-1 \\
\end{pmatrix} \bigcup \Phi(L(\Upsilon_{pqr})).
\end{align*}

Thus, the remaining $6$ Laplacian eigenvalues can be obtained by the characteristic polynomial of the matrix 
\[L(\Upsilon_{pqr})  =\displaystyle \begin{bmatrix}
	|V|-\phi(qr) & -\phi(pr) & -\phi(pq) & -\phi(r) & -\phi(p) &-\phi(q)\\
	-\phi(qr) &|V|-\phi(pr)  & -\phi(pq) & -\phi(r)  & -\phi(p) &-\phi(q)\\
	-\phi(qr) & -\phi(pr) & |V|-\phi(pq)  & -\phi(r)  & -\phi(p) &-\phi(q)\\
	-\phi(qr) & -\phi(pr) & -\phi(pq) & |V|-\phi(r) & -\phi(p) &-\phi(q) \\
 -\phi(qr) & -\phi(pr) & -\phi(pq) & -\phi(r) & |V|-\phi(p) &-\phi(q) \\
 -\phi(qr) & -\phi(pr) & -\phi(pq) & -\phi(r) & -\phi(p) & |V|-\phi(q) 
\end{bmatrix},\]
where the matrix $L(\Upsilon_{pqr})$ is obtained by indexing the rows and columns as $p, q,r, pq, qr,pr.$
\begin{equation*}
L(\Upsilon_{pqr})=
\begin{bmatrix}
	|V| & 0 & 0 & 0 & 0 & 0\\
         0 & |V| & 0 & 0& 0 & 0\\
         0 &  0 & |V|  & 0 & 0 & 0\\
	0 & 0 & 0 & |V|& 0 & 0\\
        0 & 0 & 0 & 0& |V| & 0\\
        0 & 0 & 0 & 0& 0 & |V|\\
\end{bmatrix}
+
\begin{bmatrix}
	-\phi(qr) & -\phi(pr) & -\phi(pq) & -\phi(r) & -\phi(p) &-\phi(q) \\
	-\phi(qr) &-\phi(pr)  & -\phi(pq) & -\phi(r)  & -\phi(p) &-\phi(q)\\
	-\phi(qr) & -\phi(pr) & -\phi(pq)  & -\phi(r)  & -\phi(p) &-\phi(q)\\ 
	-\phi(qr) & -\phi(pr) & -\phi(pq) & -\phi(r) & -\phi(p) &-\phi(q) \\
 
     -\phi(qr) & -\phi(pr) & -\phi(pq) & -\phi(r) & -\phi(p) &-\phi(q) \\
       -\phi(qr) & -\phi(pr) & -\phi(pq) & -\phi(r) & -\phi(p) & -\phi(q) 
\end{bmatrix}.
\end{equation*} \\
Note that the eigenvalue of the first matrix is $|V|$ with multiplicity $6$ and the second matrix is of rank one. Consequently, the eigenvalues of the second matrix are $-|V|$ with multiplicity $1$ and $0$ with multiplicity $5$, respectively. Therefore, the eigenvalues of $L(\Upsilon_{pqr})$ are $|V|$ with multiplicity $5$ and $0$ with multiplicity $1$, respectively. Hence, the Laplacian spectrum of $W\Gamma(\mathbb{Z}_{pqr})$ is
\[\displaystyle \begin{pmatrix}
0 & |V| &  |V|-\phi\left( qr\right  ) &|V|-\phi\left( pr\right  )&|V|-\phi\left( pq\right  )\\
 1 & \phi\left( p\right)+\phi\left(q\right)+\phi\left(r\right)+2 & \phi\left(qr\right)-1&\phi\left(pr\right)-1&\phi\left(pq\right)-1
\end{pmatrix}.\]
    
\item[(ii)] Let $n = p^kq \; (k \geq 2)$. Note that $\{p, p^2, \ldots, p^{k}, q, pq, p^2q, \ldots, p^{k-1}q\}$ is the vertex set of the graph $\Upsilon_{p^{k}q}$. By Lemma \ref{inducedsubgraphequaltogamma},
\[W\Gamma(\mathbb{Z}_{p^{k}q}) = \Upsilon_{p^{k}q}[W\Gamma(\mathcal{A}_{p}), W\Gamma(\mathcal{A}_{p^2}), \ldots, W\Gamma(\mathcal{A}_{p^{k}}), W\Gamma(\mathcal{A}_{q}), W\Gamma(\mathcal{A}_{pq}), W\Gamma(\mathcal{A}_{p^2q}), \ldots, W\Gamma(\mathcal{A}_{p^{k-1}q})],\]
where $W\Gamma(\mathcal{A}_{p}) = {K}_{\phi(p^{k-1}q)}$, $W\Gamma(\mathcal{A}_{p^2}) = {K}_{\phi(p^{k-2}q)}$, $\ldots$, $W\Gamma(\mathcal{A}_{p^{k}}) = {K}_{\phi(q)}$, $W\Gamma(\mathcal{A}_{q}) = \overline{K}_{\phi(p^{k})}$, $W\Gamma(\mathcal{A}_{pq}) = {K}_{\phi(p^{k-1})}$, $\ldots$, $W\Gamma(\mathcal{A}_{p^{k-1}q}) = {K}_{\phi(p)}$.
 The cardinality $|V|$ of the vertex set $V$ of $W\Gamma(\mathbb{Z}_{p^kq})$ is given by \[ \phi(p^{k-1}q) + \phi(p^{k-2}q) + \phi(p^{k-3}q) + \cdots + \phi(q) + \phi(p^{k-1}) + \phi(p^{k-2}) + \phi(p^{k-3}) + \cdots + \phi(p) + \phi(p^k).\]
It follows that
$D_p = \phi(p^{k})+\phi(p^{k-2}q)+\phi(p^{k-3}q)+\cdots+\phi(q)+\phi(p^{k-1})+\phi(p^{k-2})+\cdots +\phi(p)=|V|-\phi(p^{k-1}q)$, $D_{p^2}=|V|-\phi(p^{k-2}q)$,$\ldots$, $D_{p^{k}} =| V|-\phi(q)$,  $D_{pq} = | V|-\phi(p^{k-1})$,$\ldots,$ $D_{p^rq} = | V|-\phi(p^{k-r})$,\ldots, $D_{p^{k-1}q} =  |V|-\phi(p)$, $D_q=|V|-\phi(p^{k})$ 
Consequently, by Theorem \ref{laplacianeigenvaluescozero}, the Laplacian spectrum $\Phi_{L}(W\Gamma(\mathbb{Z}_{p^{k}q}))$ is
\begin{align*}
 &= (D_{p} + (\Phi_{L}(W\Gamma(\mathcal{A}_{p})) \setminus \{0\})) \bigcup (D_{p^2} + (\Phi_{L}(W\Gamma(\mathcal{A}_{p^2})) \setminus \{0\})) \bigcup \cdots \bigcup (D_{p^{k}} + (\Phi_{L}(W\Gamma(\mathcal{A}_{p^{k}})) \setminus \{0\})) \\
&  \bigcup (D_{q} + (\Phi_{L}(W\Gamma(\mathcal{A}_{q})) \setminus \{0\})) \bigcup (D_{pq} + (\Phi_{L}(W\Gamma(\mathcal{A}_{pq})) \setminus \{0\})) \bigcup (D_{p^2q} + (\Phi_{L}(W\Gamma(\mathcal{A}_{p^2q})) \setminus \{0\}))\\
&\bigcup \cdots \bigcup (D_{p^{k-1}q} + (\Phi_{L}(W\Gamma(\mathcal{A}_{p^{k-1}q})) \setminus \{0\})) 
  \bigcup \Phi(L(\Upsilon_{p^{k}q})) \\
&= \displaystyle \begin{pmatrix}
|V| &  |V|-\phi(p^k)\\
\left(\sum \limits_{i=1}^{k}\phi(p^{k-i}q)+\sum \limits_{i=1}^{k-1}\phi(p^{k-i})\right)-(2k-1) &\phi(p^k)-1  
\end{pmatrix} \bigcup \Phi(L(\Upsilon_{p^{k}q})).
\end{align*}

Thus, the remaining $2k$ Laplacian eigenvalues can be obtained by characteristic polynomial of the following matrix $L(\Upsilon_{p^{k}q})$ \\
\[ = \begin{bmatrix}
|V|-\phi(p^{k-1}q) &  -\phi(p^{k-2}q) &  \cdots &- \phi(q )& -\phi(p^{k})&-\phi(p^{k-1}) & \cdots &-\phi(p^2) &-\phi(p)\\
-\phi(p^{k-1}q) &  |V|-\phi(p^{k-2}q) &   \cdots & -\phi(q ) & -\phi(p^{k}) & -\phi(p^{k-1}) & \cdots &-\phi(p^2) &-\phi(p) \\
\vdots & \vdots &  \vdots & \vdots & \vdots & \vdots & \vdots & \vdots & \vdots \\
-\phi(p^{k-1}q) &   -\phi(p^{k-2}q) &   \cdots & |V|-\phi(q) & -\phi(p^{k}) & -\phi(p^{k-1}) & \cdots  &-\phi(p^2) &-\phi(p)\\
-\phi(p^{k-1}q) &   -\phi(p^{k-2}q) &   \cdots & -\phi(q) & |V|-\phi(p^{k}) & -\phi(p^{k-1}) & \cdots  &-\phi(p^2) &-\phi(p)\\

 -\phi(p^{k-1}q) &  -\phi(p^{k-2}q) &   \cdots &- \phi(q )&-\phi(p^{k})&|V|-\phi(p^{k-1}) & \cdots& -\phi(p^2) &-\phi(p)\\
\vdots & \vdots &  \vdots & \vdots & \vdots & \vdots & \vdots & \vdots & \vdots \\
-\phi(p^{k-1}q) &  -\phi(p^{k-2}q) &  \cdots &- \phi(q )& -\phi(p^{k})&\cdots &|V|-\phi(p^{k-r})&\cdots &-\phi(p)\\
\vdots & \vdots &  \vdots & \vdots & \vdots & \vdots & \vdots & \vdots & \vdots \\
-\phi(p^{k-1}q) &  -\phi(p^{k-2}q) &  \cdots &- \phi(q )& -\phi(p^{k})&-\phi(p^{k-1}) & \cdots &-\phi(p^2) &|V|-\phi(p)
\end{bmatrix},\] 
\vspace{.1cm}
where the matrix $L(\Upsilon_{p^{k}q})$ is obtained by indexing the rows and columns as $p, p^2, \ldots, p^{k},q, pq,p^2q,\ldots,p^{k-2}q,p^{k-1}q$. The matrix $ L(\Upsilon_{p^{k}q})$ can be decomposed into $A + B$, where $A = |V|I$  with the identity matrix $I$ of order $2k$ and 
\[B = \begin{bmatrix}
-\phi(p^{k-1}q) &  -\phi(p^{k-2}q) &  \cdots &- \phi(q )& -\phi(p^{k})&-\phi(p^{k-1}) & \cdots &-\phi(p^2) &-\phi(p)\\
-\phi(p^{k-1}q) &  -\phi(p^{k-2}q) &   \cdots & -\phi(q ) & -\phi(p^{k}) & -\phi(p^{k-1}) & \cdots &-\phi(p^2) &-\phi(p) \\
\vdots & \vdots &  \vdots & \vdots & \vdots & \vdots & \vdots & \vdots & \vdots \\
-\phi(p^{k-1}q) &   -\phi(p^{k-2}q) &   \cdots & -\phi(q) & -\phi(p^{k}) & -\phi(p^{k-1}) & \cdots  &-\phi(p^2) &-\phi(p)\\
-\phi(p^{k-1}q) &   -\phi(p^{k-2}q) &   \cdots & -\phi(q) & -\phi(p^{k}) & -\phi(p^{k-1}) & \cdots  &-\phi(p^2) &-\phi(p)\\

 -\phi(p^{k-1}q) &  -\phi(p^{k-2}q) &   \cdots &- \phi(q )&-\phi(p^{k})&-\phi(p^{k-1}) & \cdots& -\phi(p^2) &-\phi(p)\\
\vdots & \vdots &  \vdots & \vdots & \vdots & \vdots & \vdots & \vdots & \vdots \\
-\phi(p^{k-1}q) &  -\phi(p^{k-2}q) &  \cdots &- \phi(q )& -\phi(p^{k})&\cdots &-\phi(p^{k-r})&\cdots &-\phi(p)\\
\vdots & \vdots &  \vdots & \vdots & \vdots & \vdots & \vdots & \vdots & \vdots \\
-\phi(p^{k-1}q) &  -\phi(p^{k-2}q) &  \cdots &- \phi(q )& -\phi(p^{k})&-\phi(p^{k-1}) & \cdots &-\phi(p^2) &-\phi(p)
\end{bmatrix}.\]

In the matrix $A$, the eigenvalue $|V|$ appears with multiplicity $2k$ and the matrix $B$ is of rank one. Consequently, the eigenvalues of the matrix $B$ are $-|V|$ and $0$, with  multiplicities  $1$ and $2k-1$, respectively. Therefore, the eigenvalues of $L(\Upsilon_{p^{k}q})$ are $|V|$ and $0$ with multiplicity $2k-1$ and $1$, respectively.

Hence, the Laplacian spectrum of $W\Gamma(\mathbb{Z}_{p^{k}q})$ is
\[\displaystyle \begin{pmatrix}
0 & |V| &  |V|-\phi(p^k)  \\
 1 & \left(\sum \limits_{i=1}^{k}\phi(p^{k-i}q)+\sum \limits_{i=1}^{k-1}\phi(p^{k-i})\right) & \phi(p^k)-1 \\
\end{pmatrix}.\]
\end{itemize}
\end{example}

\begin{theorem}
Let $n = p_1p_2\cdots p_m{q_1}^{k_1}{q_2}^{k_2}\cdots{q_r}^{k_r} $ $(k_i\geq 2,m\geq 1,r\geq0)$, where ${p_{i}}'s$ and ${q_{i}}'s$ are distinct primes. Suppose $D=\{d_1,d_2,\ldots,d_{\tau(n)-2}\}$ is the set of all proper divisors of $n$. Then the  Laplacian spectrum  of $W\Gamma(\mathbb{Z}_n)$ is given by
\[\displaystyle \begin{pmatrix}
0 & |V| & | V|-\phi\left(\frac{n}{p_1}\right)& |V|-\phi\left(\frac{n}{p_2}\right)&\cdots&|V|-\phi\left(\frac{n}{p_m}\right) \\
 1 &\left(\sum \limits_{d_i\neq p_i}\phi\left( \frac{n}{d_i}\right)+m-1\right) &\phi\left(\frac{n}{p_1}\right)-1&\phi\left(\frac{n}{p_2}\right)-1&\cdots&\phi\left(\frac{n}{p_m}\right)-1 \ \\
\end{pmatrix},\]\\
where $V$ is the  vertex set of $W\Gamma(\mathbb{Z}_n)$. Indeed, the weakly zero-divisor graph $W\Gamma(\mathbb{Z}_n)$ of the ring $\mathbb{Z}_{n}$ is Laplacian integral.
\end{theorem}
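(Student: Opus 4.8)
The plan is to read off the spectrum directly from the generalized join decomposition and then exploit the completeness of $\Upsilon_n$ to collapse almost everything into a single eigenvalue. By Lemma \ref{inducedsubgraphequaltogamma} we have $W\Gamma(\mathbb{Z}_n) = \Upsilon_n[W\Gamma(\mathcal{A}_{d_1}), \ldots, W\Gamma(\mathcal{A}_{d_{\tau(n)-2}})]$, so Theorem \ref{laplacianeigenvaluescozero} gives $\Phi_L(W\Gamma(\mathbb{Z}_n)) = \bigcup_i \bigl(D_{d_i} + (\Phi_L(W\Gamma(\mathcal{A}_{d_i})) \setminus \{0\})\bigr) \cup \Phi(L(\Upsilon_n))$. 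Since $\Upsilon_n$ is complete, each proper divisor is adjacent to every other, and therefore $D_{d_i} = \sum_{j \neq i} \phi(n/d_j) = |V| - \phi(n/d_i)$ for every $i$. This single observation is what makes the block contributions simplify.

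First I would evaluate each block using the classification in Lemma \ref{adjacenyofjoin}. If $d_i = p_j$ for some $j$, then $W\Gamma(\mathcal{A}_{d_i}) = \overline{K}_{\phi(n/p_j)}$, whose Laplacian spectrum is $0$ with multiplicity $\phi(n/p_j)$; deleting one $0$ and shifting by $D_{p_j} = |V| - \phi(n/p_j)$ produces the eigenvalue $|V| - \phi(n/p_j)$ with multiplicity $\phi(n/p_j) - 1$. If instead $d_i \notin \{p_1, \ldots, p_m\}$, then $W\Gamma(\mathcal{A}_{d_i}) = K_{\phi(n/d_i)}$, contributing the eigenvalue $\phi(n/d_i)$ with multiplicity $\phi(n/d_i) - 1$; shifting by $D_{d_i} = |V| - \phi(n/d_i)$ collapses this to $|V|$ with multiplicity $\phi(n/d_i) - 1$. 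Thus every complete block feeds only the eigenvalue $|V|$, while each empty block $p_j$ contributes the distinct eigenvalue $|V| - \phi(n/p_j)$.

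Next I would compute $\Phi(L(\Upsilon_n))$ by the rank-one trick already used in the worked examples. Writing $L(\Upsilon_n) = |V| I - M$, where $M$ is the matrix whose $(i,j)$ entry equals $\phi(n/d_j)$ for all $i, j$, one sees that $M = \mathbf{1}\,\mathbf{v}^{T}$ with $\mathbf{v} = (\phi(n/d_1), \ldots, \phi(n/d_{\tau(n)-2}))^{T}$ has rank one and nonzero eigenvalue $\mathbf{v}^{T}\mathbf{1} = \sum_i \phi(n/d_i) = |V|$. Hence $M$ has eigenvalue $|V|$ once and $0$ with multiplicity $\tau(n) - 3$, so $L(\Upsilon_n)$ has eigenvalue $0$ once and $|V|$ with multiplicity $\tau(n) - 3$.

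Finally I would assemble the pieces and do the multiplicity bookkeeping, which I expect to be the only delicate point. The eigenvalue $0$ occurs exactly once, consistent with $W\Gamma(\mathbb{Z}_n)$ being connected (the generalized join over the connected graph $\Upsilon_n$). Each $|V| - \phi(n/p_j)$ occurs with multiplicity $\phi(n/p_j) - 1$. The multiplicity of $|V|$ is the $\tau(n) - 3$ copies from $\Phi(L(\Upsilon_n))$ plus $\sum_{d_i \notin \{p_1,\ldots,p_m\}} (\phi(n/d_i) - 1)$ from the complete blocks; since exactly $m$ of the $\tau(n) - 2$ proper divisors are the primes $p_j$, there are $\tau(n) - 2 - m$ complete blocks, and the count telescopes to $(\tau(n)-3) + \bigl(\sum_{d_i \neq p_i}\phi(n/d_i)\bigr) - (\tau(n) - 2 - m) = \sum_{d_i \neq p_i}\phi(n/d_i) + m - 1$, exactly the claimed multiplicity. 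Every eigenvalue produced is $0$, $|V|$, or $|V| - \phi(n/p_j)$, each an integer, so $W\Gamma(\mathbb{Z}_n)$ is Laplacian integral.
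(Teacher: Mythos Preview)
Your proposal is correct and follows essentially the same route as the paper: apply Theorem~\ref{laplacianeigenvaluescozero} to the decomposition of Lemma~\ref{inducedsubgraphequaltogamma}, use Lemma~\ref{adjacenyofjoin} to identify each block as $\overline{K}$ or $K$, shift by $D_{d_i}=|V|-\phi(n/d_i)$, and finish by the rank-one decomposition $L(\Upsilon_n)=|V|I-M$. Your explicit telescoping of the multiplicity of $|V|$ and the presentation of $M=\mathbf{1}\,\mathbf{v}^{T}$ are slightly more detailed than the paper's version, but the argument is the same.
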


\begin{proof} Let $A'=\{p_1,p_2,\ldots, p_m\}$.
 By Theorem \ref{laplacianeigenvaluescozero}, the  Laplacian spectrum of  $W\Gamma(\mathbb{Z}_{n})$ is
\begin{align*}
\Phi_{L}(W\Gamma(\mathbb{Z}_{n})) &= \bigcup_{d_i\in A} (D_{d_i} + (\Phi_{L}(W\Gamma(\mathcal{A}_{d_i})) \setminus \{0\})) \bigcup_{d_i\notin A} (D_{d_{j}} + (\Phi_{L}(W\Gamma(\mathcal{A}_{d_{j}})) \setminus \{0\}))  \bigcup \Phi(L(\Upsilon_{n})). 
\end{align*}
By Lemma \ref{valueof partition}, \ref{adjacenyofjoin} and Corollary \ref{partitionofcozerodivisorgraphisomorphic}, for each $d_i\in A'$, we have $W\Gamma(\mathcal{A}_{d_{i}}) = \overline{K}_{\phi \left(\frac{n}{d_i}\right)}$ and $W\Gamma(\mathcal{A}_{d_j}) =  {K}_{\phi\left(\frac{n}{d_j}\right)}$, where $d_j\notin A'$.
One can observe that the cardinality $|V|$ of the vertex set $V$ of $W\Gamma(\mathbb{Z}_{n})$ is $\left(\sum \limits_{i=1}^{\tau(n)-2}\phi\left(\frac{n}{d_{i}}\right)\right)$.
Also, note that for $1\leq i\leq{\tau(n)-2}$, we have

\[D_{d_i} 
 =\sum \limits_{\substack{j=1\\j\neq i}}^{\tau(n)-2}\phi\left(\frac{n}{d_{j}}\right)
 =|V|-\phi\left (\frac{n}{d_i}\right).\]
Thus, we obtain $\Phi_{L}(W\Gamma(\mathbb{Z}_{n}))$
\begin{eqnarray*}
  &= \displaystyle \bigcup_{d_i\in A'}\left( \left( |V|-\phi\left(\frac{n}{d_i}\right)\right) + \left(\Phi_{L}\left(\overline{K}_{\phi\left(\frac{n}{d_i}\right)}\right) \setminus \{0\}\right)\right) \displaystyle \bigcup_{d_j\notin A'}\left( \left(|V|-\phi\left(\frac{n}{d_j}\right)\right) + \left(\Phi_{L}\left(K_{\phi\left(\frac{n}{d_j}\right)}\right) \setminus \{0\}\right)\right)  \bigcup \Phi(L(\Upsilon_{n})) \\ 
  &= \displaystyle \bigcup_{d_i\in A'}\left( \left( |V|-\phi\left(\frac{n}{d_i}\right)\right) +\left(\begin{pmatrix} 0 \\
\phi\left(\frac{n}{d_i}\right)\\
\end{pmatrix}  \setminus \{0\}\right)\right) \displaystyle \bigcup_{d_j\notin A'}\left( \left(|V|-\phi\left(\frac{n}{d_j}\right)\right) +\left( \begin{pmatrix}
0 &  \phi\left(\frac{n}{d_j}\right) \\
1& \phi\left(\frac{n}{d_j}\right)-1\\
\end{pmatrix}  \setminus \{0\}\right)\right)  \displaystyle \bigcup \Phi(L(\Upsilon_{n}))\\
&= \displaystyle \bigcup_{d_i\in A'}\left( \left( |V|-\phi\left(\frac{n}{d_i}\right)\right) +\begin{pmatrix} 0 \\
\phi\left(\frac{n}{d_i}\right)-1\\
\end{pmatrix}  \right) \bigcup_{d_j\notin A'}\left( \left(|V|-\phi\left(\frac{n}{d_j}\right)\right) + \begin{pmatrix}
  \phi\left(\frac{n}{d_j}\right) \\
 \phi\left(\frac{n}{d_j}\right)-1\\
\end{pmatrix}  \right) \displaystyle \bigcup \Phi(L(\Upsilon_{n})) \\
&= \displaystyle \bigcup_{d_i\in A'}\begin{pmatrix}
|V|- \phi\left(\frac{n}{d_i}\right)\\
\phi\left(\frac{n}{d_i}\right)-1\\
\end{pmatrix} \displaystyle \bigcup_{d_j\notin A'} \begin{pmatrix}
  |V| \\
 \phi\left(\frac{n}{d_j}\right)-1\\
\end{pmatrix}    \bigcup \Phi(L(\Upsilon_{n})) \\
& =\begin{pmatrix}
|V| & | V|-\phi\left(\frac{n}{p_1}\right)&|V|-\phi\left(\frac{n}{p_2}\right)&\cdots&  |V|-\phi\left(\frac{n}{p_m}\right) \\
\sum \limits_{d_i\neq p_i}\phi\left(\frac{n}{d_i}\right)-(\tau(n)-2-m) &\phi\left(\frac{n}{p_1}\right)-1& \phi\left(\frac{n}{p_2}\right)-1&\cdots& \phi\left(\frac{n}{p_m}\right)-1
\end{pmatrix} \bigcup \Phi(L(\Upsilon_{n})).
\end{eqnarray*}
Thus, the remaining  Laplacian eigenvalues can be obtained by characteristic polynomial of the following matrix\\
\[L(\Upsilon_{n}) =  
 \displaystyle \begin{bmatrix}
|V|-\phi\left(\frac{n}{d_1}\right) &  -\phi\left(\frac{n}{d_2}\right) &\cdots & \cdots &- \phi\left(\frac{n}{d_m}\right)& -\phi\left(\frac{n}{d_{m+1}}\right)&\cdots&\cdots & -\phi\left(\frac{n}{d_{\tau(n)-2}}\right)\\
-\phi\left(\frac{n}{d_1}\right) &  |V|-\phi\left(\frac{n}{d_2}\right) &\cdots & \cdots &-\phi\left(\frac{n}{d_m}\right) & -\phi\left(\frac{n}{d_{m+1}}\right) & \cdots &\cdots &-\phi\left(\frac{n}{d_{\tau(n)-2}}\right) \\
\vdots & \vdots &  \vdots & \vdots & \vdots & \vdots & \vdots & \vdots & \vdots \\
\vdots & \vdots &  \vdots & \vdots & \vdots & \vdots & \vdots & \vdots & \vdots \\
\vdots & \vdots &  \vdots & \vdots & \vdots & \vdots & \vdots & \vdots & \vdots \\
-\phi\left(\frac{n}{d_1}\right) &  -\phi\left(\frac{n}{d_2}\right) &\cdots  &\cdots & |V|-\phi\left(\frac{n}{d_m}\right)& -\phi\left(\frac{n}{d_{m+1}}\right)&\cdots&\cdots & -\phi\left(\frac{n}{d_{\tau(n)-2}}\right)\\
-\phi\left(\frac{n}{d_1}\right) &  -\phi\left(\frac{n}{d_2}\right) &  \cdots&\cdots&- \phi\left(\frac{n}{d_m}\right)& |V|-\phi\left(\frac{n}{d_{m+1}}\right)&\cdots & \cdots & -\phi\left(\frac{n}{d_{\tau(n)-2}}\right)\\

\vdots & \vdots &  \vdots & \vdots & \vdots & \vdots & \vdots & \vdots & \vdots \\
\vdots & \vdots &  \vdots & \vdots & \vdots & \vdots & \vdots & \vdots & \vdots \\
\vdots & \vdots &  \vdots & \vdots & \vdots & \vdots & \vdots & \vdots & \vdots \\
-\phi\left(\frac{n}{d_1}\right) &  -\phi\left(\frac{n}{d_2}\right) &  \cdots&\cdots&- \phi\left(\frac{n}{d_m}\right)& -\phi\left(\frac{n}{d_{m+1}}\right)&\cdots & \cdots & |V|-\phi\left(\frac{n}{d_{\tau(n)-2}}\right)\\
\end{bmatrix},\]\\
where matrix $L(\Upsilon_{{n}})$ is obtained by indexing the rows and columns as $d_1, d_2, \ldots,d_m,  d_{m+1}, \ldots, d_{\tau(n)-2}$, where $d_i= p_i$ $(1\leq i\leq m)$. Observe that $L(\Upsilon_{{n}})=|V|I+B$, where $I$ is the identity matrix of order $\tau(n)-2$ and the matrix $B$ is of rank one such that the sum of each row of $B$ is equal to $-|V|$. Thus, the Laplacian spectrum of $L(\Upsilon_{n})$ is\\ 
\[\displaystyle \begin{pmatrix}
0 & |V|  \\
 1 &\tau(n)-3  \\
\end{pmatrix}.\]
 Therefore, the Laplacian spectrum of $W\Gamma(\mathbb{Z}_n)$ is
\[\displaystyle \begin{pmatrix}
0 & |V| & | V|-\phi\left(\frac{n}{p_1}\right)& |V|-\phi\left(\frac{n}{p_2}\right)&\cdots&|V|-\phi\left(\frac{n}{p_m}\right) \\
 1 &\left(\sum \limits_{d_i\neq p_i}\phi\left(\frac{n}{d_i}\right)+m-1\right) &\phi\left(\frac{n}{p_1}\right)-1&\phi\left(\frac{n}{p_2}\right)-1&\cdots&\phi\left(\frac{n}{p_m}\right)-1 \ \\
\end{pmatrix}.\]
\end{proof}

\textbf{Acknowledgement:} The first and second author gratefully acknowledge Birla Institute of Technology and Science (BITS) Pilani, India, for providing financial support.

\end{document}